\documentclass[12pt]{article}

\usepackage{latexsym,amsfonts,amsmath,amsthm,amssymb, epsfig}
\usepackage{color}

\usepackage{url}
\usepackage{hyperref}

\usepackage[vcentering,dvips]{geometry}
\newtheorem{theorem}{Theorem}

\newtheorem{proposition}[theorem]{Proposition}

\newtheorem*{oq}{Open Question}
\newtheorem{remark}{Remark}

\theoremstyle{remark}

\theoremstyle{definition}

\newcommand{\R}{\mathbb{R}}

\newcommand{\N}{\mathbb{N}}

\newcommand{\qmc}{{\rm qmc}}
\newcommand{\bx}{{\rm box}}
\newcommand{\di}{{\rm disc}}

\newcommand{\per}{{\rm per}} 
\newcommand{\ex}{{\rm ext}}
\newcommand{\iit}{{\rm int}} 
\newcommand{\rd}{\,{\rm d}}
\newcommand{\bsx}{{\boldsymbol x}}
\newcommand{\bsy}{{\boldsymbol y}}
\newcommand{\bst}{{\boldsymbol t}}
\newcommand{\bsa}{{\boldsymbol a}}
\newcommand{\bszero}{{\boldsymbol 0}}
\newcommand{\bsone}{{\boldsymbol 1}}
\newcommand{\bsdelta}{{\boldsymbol \delta}}
\newcommand{\uu}{\mathfrak{u}}

\newcommand{\cP}{\mathcal{P}}
\newcommand{\cA}{\mathcal{A}}
\newcommand{\cF}{\mathcal{F}}
\newcommand{\DD}{\mathbb{D}}

\title{Tractability versus curse of dimensionality for geometric $L_p$-discrepancies}  

\author{Erich Novak and  Friedrich Pillichshammer}

\date{} 

\begin{document}

\maketitle

\begin{abstract}
This paper studies tractability versus the curse of dimensionality for several geometric $L_p$-discrepancies through a unified discrepancy--integration duality framework, where worst case integration errors in suitable function spaces equal the corresponding discrepancies. A general lower bound method for non-negative linear rules in spaces under broad tensor-product assumptions 
establishes exponential information complexity in the dimension~$d$, yielding the curse of dimensionality for the respective discrepancy. We complement this overview by new results on discrepancy--integration duality and the curse of dimensionality for the periodic $L_p$-discrepancy. The current state of research on this general problem is summarized in a clearly laid out table, which also highlights the remaining open questions. 
\end{abstract}

\centerline{\begin{minipage}[hc]{130mm}{
{\em Keywords:} Discrepancy, numerical integration, curse of dimensionality, tractability, quasi-Monte Carlo\\
{\em MSC 2010:} 11K38, 65C05, 65Y20}
\end{minipage}}

\section{Geometric Discrepancy}\label{sec:intro}

For a set $\cP=\{\bsx_1,\ldots,\bsx_N\}$ consisting of $N$ points in the $d$-dimensional unit-cube $[0,1)^d$ the local discrepancy with respect to a measurable test set $C \subseteq [0,1)^d$ is defined as $$\Delta_{\cP}(C):=\frac{|\{k \in \{1,2,\ldots,N\}\ : \ \bsx_k \in C\}|}{N}-{\rm volume}(C).$$ Choosing a suitable class $\mathcal{C}$ of test sets, that are suitably parametrized, the local discrepancy can be considered as a function of this parameter which is usually emphasized by the nomenclature ``local discrepancy function''. Taking a norm of the local discrepancy functions leads to the notion of a (geometric) discrepancy with respect to the given class $\mathcal{C}$ of test sets. This yields a quantitative measure for the uniformity of the point set $\cP$ with respect to $\mathcal{C}$. The term ``geometric'' appears in \cite[Sections~1.2 and 1.3]{Mat99} and refers to underlying geometric classes $\mathcal{C}$ of test sets.

Many notions of discrepancy are based on a suitable $L_p$-norm. There are a lot of examples in literature. In the present paper we consider the prominent notions of anchored/quadrant $L_p$-discrepancy, a concept that contains the star- and centered discrepancies as special cases, and unanchored $L_p$-discrepancy, such as the extreme- and periodic $L_p$-discrepancy.

In applications (numerical analysis) one usually studies a slightly more general concept of discrepancy with nodes weighted by reals $\cA=\{a_1,\ldots,a_N\}$. In this sense the local discrepancy is defined as $$\Delta_{\cP,\cA}(C):=\sum_{k=1}^N a_k {\bf 1}_C(\bsx_k)-{\rm volume}(C).$$ If $a_1=\ldots=a_N=1/N$, we speak about ``QMC weights'' and denote the corresponding set by $\cA^{\qmc}$, then we are in the classical case, i.e., $\Delta_{\cP,\cA^{\qmc}}(C)=\Delta_{\cP}(C)$. 

Let $D_N(\cP,\cA)=\|\Delta_{\cP,\cA}\|$ be any (geometric) discrepancy, where the norm is not yet specified (often it is an $L_p$ norm). Then for $d,N \in \N$ the $N$-th minimal discrepancy in dimension $d$ is defined as $${\rm disc}(N,d):=\inf_{\cP,\cA} D_N(\cP,\cA),$$ where the infimum is extended over all $N$-element sets $\cP$ in $[0,1)^d$ and any real weights $\cA$. We write ${\rm disc}^+$, if the infimum is extended over all $N$-element sets $\cP$ but exclusively over non-negative weights $\cA^+$, and ${\rm disc}^{\qmc}$ if we allow only QMC weights and extend the infimum only over $N$-point sets $\cP$.

Define further ${\rm disc}(0,d):=\|{\rm Vol}\|$ as the norm of the volume functional ${\rm Vol}: C \mapsto {\rm volume}(C)$. We interpret this quantity as the discrepancy of the empty set and call it therefore the ``initial discrepancy''. The inverse of discrepancy is defined as the minimal number of points that is needed in order to reduce the initial discrepancy by a factor of $\varepsilon$. In more detail, for $\varepsilon \in (0,1)$ we define $$N_{\di}^{\bullet}(\varepsilon,d):=\min\{N \in \N : {\rm disc}^{\bullet}(N,d) \le \varepsilon \, {\rm disc}(0,d)\},$$ where $\bullet \in \{\text{blank},+,\qmc\}$. 
A central  question  is  the growth rate of $N_{\di}^{\bullet}(\varepsilon,d)$ when $\varepsilon \rightarrow 0^+$ 
and/or $d \rightarrow \infty$ (i.e., when $d+\varepsilon^{-1} \rightarrow \infty$). Such questions are typically studied in the field ``Information Based Complexity'' (see \cite{NW08,NW10,NW12}). 

If $N_{\di}^{\bullet}(\varepsilon,d)$ grows at least exponentially fast in $d$ for a fixed $\varepsilon >0$, 
more precisely, if there exists a real $C>1$ such that $N_{\di}^{\bullet}(\varepsilon,d) \ge C^d$ 
for infinitely many $d \in \N$, then the discrepancy (for arbitrary weights or non-negative weights or QMC-weights,  respectively, depending on $\bullet \in \{\text{blank},+,\qmc\}$) is said to suffer from the curse of dimensionality. Of course, this is a very unpleasant behavior. Naturally, one would prefer a subexponential growth rate of $N_{\di}^{\bullet}(\varepsilon,d)$ in $d$ and $\varepsilon^{-1}$ for $d+\varepsilon^{-1} \rightarrow \infty$. In this case a discrepancy is said to be tractable and there are various notions of tractability in order to classify the growth rate of $N_{\di}^{\bullet}(\varepsilon,d)$ more accurately. For example, polynomially tractability means that there exist numbers $C,\sigma>0$ and $\tau \ge 0$ such that $N_{\di}^{\bullet}(\varepsilon,d)\le C d^{\tau} \varepsilon^{- \sigma}$ for all $\varepsilon \in (0,1)$ and $d \in \N$. If this holds even for $\tau=0$, then one speaks about strong polynomial tractability.

To prove for a discrepancy the curse of dimensionality, one needs general lower bounds for arbitrary sets of points and weights. Conversely, to prove tractability, a good upper bound for specific sets of points and weights suffices. Recently, we have developed a useful method to prove the curse of dimensionality for various geometric $L_p$-discrepancies (see \cite{NP25c,NP25b,NP25,NP26a}). This method, which is based on the duality between discrepancy and integration, will be presented in a general context in Section~\ref{sec:meth}. But first (Section~\ref{sec:diintdu}) we explain the framework of a ``discrepancy--integration duality''. In Section~\ref{sec:exa} we present several concrete examples.  Most of the results are scattered across various publications. Here we provide a systematic overview with precise references or proofs, if no reference is available. In the final Section~\ref{sec:close}, we summarize everything again in a clearly laid out table, which also clearly shows the remaining open questions.

\section{Discrepancy--integration duality}\label{sec:diintdu}

Often a discrepancy is related to the worst-case error of a quasi-Monte Carlo (QMC) integration problem in the sense that there exists a suitable normed space of functions over $[0,1]^d$ such that the worst-case error of a QMC rule for this space is exactly a suitable (geometric) discrepancy. Historically, the famous Koksma-Hlawka inequality was the first result in this direction (see \cite{Hla0,Kok42}). We call such a framework a ``discrepancy--integration duality'' and we call the corresponding integration problem a dual integration problem. 

We consider linear rules based on a point set $\cP=\{\bsx_1,\ldots,\bsx_N\}$ in $[0,1)^d$ and corresponding real (integration) weights $\cA=\{a_1,\ldots,a_N\}$ of the form   
\begin{equation}\label{lin:rule}
Q_{\cP,\cA}(f):=\sum_{k=1}^N a_k f(\bsx_k) \quad \mbox{for functions $f:[0,1]^d \rightarrow \R$.}
\end{equation}
Sometimes we restrict ourselves to non-negative weights $a_k > 0$ for all $k \in \{1,\ldots,N\}$, which is indicated by the notation $\cA^+$. A QMC rule is the special case of $a_k=1/N$ for all $k \in \{1,\ldots,N\}$. We indicate this by the notation $\cA^{\qmc}$

Assume there exists a Banach space $(\cF_d,\|\cdot\|_d)$ of measurable functions over $[0,1]^d$ for which we consider integration 
$$I(f):=\int_{[0,1]^d} f(\bsx)\rd \bsx \qquad \mbox{for } f \in \cF_d$$ by means of linear rules or QMC rules $Q_{\cP,\cA}$, such that the  worst-case error $$e(Q_{\cP,\cA};\cF_d):=\sup_{f \in \cF_d \atop \|f\|_d \le 1}|I(f)-Q_{\cP,\cA}(f)|$$ satisfies $$e(Q_{\cP,\cA};\cF_d) = D_N(\cP,\cA).$$
Then we call this a ``discrepancy--integration duality'' and we call integration in $\cF_d$ the dual integration problem to the discrepancy $D_N$. 

A discrepancy--integration duality assigns an important interpretation to the discrepancy beyond its purely geometric definition, thereby enabling important applications in numerical analysis.  A detailed discussion for the $L_2$ case can be found in \cite[Section~9.6]{NW10}. On the other hand, a discrepancy--integration duality is also a very useful tool for making theoretical statements about the discrepancy through the study of the integration problem itself. This approach has been pursued in several papers, one particular example is \cite[Proof of Theorem~5]{HKP}.  Here we present our most recent approach to proving lower bounds for worst-case integration errors/discrepancies, which has led to new insights into the curse of dimensionality for various geometric $L_p$-discrepancies.

\section{Lower bound method}\label{sec:meth}

For an integration problem over $(\cF_d,\|\cdot\|_d)$ the $N$-th minimal error is defined as $$e(N,d)=e(N,d;\cF_d):=\min_{\cP,\cA}  e(Q_{\cP,\cA};\cF_d),$$ where the minimum is extended over all $N$-element sets $\cP$ in $[0,1)^d$ with corresponding weights $\cA$. Furthermore, if we restrict to non-negative weights we write $$e^+(N,d)=e^+(N,d;\cF_d):=\min_{\cP,\cA^+}  e(Q_{\cP,\cA^+};\cF_d),$$ and in the QMC-case we write $$e^{\qmc}(N,d)=e^{\qmc}(N,d;\cF_d):=\min_{\cP}  e(Q_{\cP,\cA^{\qmc}};\cF_d).$$ The initial error is $$e(0,d)=e(0,d;\cF_d):=\sup_{f \in \cF_d \atop \|f\|_d \le 1}|I(f)|.$$ Furthermore, we define the information complexity as $$N_{\iit}^{\bullet}(\varepsilon,d):=\min\{N \in \N : e^{\bullet}(N,d) \le \varepsilon \, e(0,d)\} \quad \mbox{for } \varepsilon \in (0,1),\ d \in \N,$$ where $\bullet \in \{{\rm blank},+,\qmc\}$. Obviously, $N_{\iit}(\varepsilon,d) \le N_{\iit}^+(\varepsilon,d) \le N_{\iit}^{\qmc}(\varepsilon,d)$.

\begin{theorem}\label{thm1}
Assume that the following holds:
\begin{enumerate}
\item For functions $f \in \cF_d$ of the form $f(\bsx)=f_1(x_1) \cdots f_d(x_d)$ we have $\|f\|_d \le \|f_1\|_1 \cdots \|f_d\|_1$, and
\item $e(0,d)=e(0,1)^d$.
\item There exists a non-negative function $h \in \cF_1$, such that $I(h/\|h\|_1)=e(0,1)$.
\item There exists for every $y \in [0,1]$ a function $s_y \in \cF_1$ such that $s_y \ge 0$, $s_y(y)=h(y)$, and $$A:=\inf_{y \in [0,1]} \frac{I(h)}{I(s_y)} > 1 \quad \mbox{ and } \quad B:= \inf_{y \in [0,1]} \frac{\|h\|_1}{\|s_y\|_1} >1.$$
\end{enumerate}
Then, for every $d \in \N$ and every $\varepsilon \in (0,\tfrac{1}{2}]$ we have $$N_{\iit}^+(\varepsilon,d) \ge C^d (1-2 \varepsilon), $$ where $C:= \min(A,B)>1$. In particular, the integration problem (for non-negative weigths and especially for QMC weights) suffers from the curse of dimensionality.
\end{theorem}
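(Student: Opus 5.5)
This is the standard Novak--Woźniakowski style argument for non-negative rules. Fix a rule $Q_{\cP,\cA^+}$ with points $\bsx_1,\ldots,\bsx_N$ and weights $a_k\ge 0$, and set $\varepsilon\in(0,\tfrac12]$. The idea is to test the rule against two product functions: the "extremal" function $f=h^{\otimes d}$ and, for each node $\bsx_k$, a "fooling" function $g_k(\bsx)=\prod_{j=1}^d s_{x_{k,j}}(x_j)$. These are chosen so that $g_k(\bsx_k)=f(\bsx_k)$, while $g_k$ has exponentially smaller integral and norm than $f$. Normalizing by assumption 3, $I(f)=I(h)^d=e(0,1)^d\|h\|_1^d=e(0,d)\|h\|_1^d$ by assumption 2. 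By assumption 1, $\|f\|_d\le\|h\|_1^d$ and $\|g_k\|_d\le\prod_j\|s_{x_{k,j}}\|_1\le B^{-d}\|h\|_1^d$.

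First I exploit the error bound on $f$. If $e(Q_{\cP,\cA^+};\cF_d)\le\varepsilon\, e(0,d)$, then
$$Q(f)\ \ge\ I(f)-\varepsilon e(0,d)\|f\|_d\ \ge\ (1-\varepsilon)\,e(0,d)\|h\|_1^d .$$
Next, let $G=\sum_{k=1}^N g_k$. Non-negativity of all $s_y$ and of the weights gives
$$Q(G)=\sum_k a_k\sum_i g_i(\bsx_k)\ \ge\ \sum_k a_k g_k(\bsx_k)=\sum_k a_k f(\bsx_k)=Q(f).$$
The error bound applied to $G$ (using the triangle inequality for the norm) gives
$$Q(G)\ \le\ I(G)+\varepsilon e(0,d)\sum_k\|g_k\|_d \ \le\ N\big(A^{-d}I(h)^d+\varepsilon e(0,d)B^{-d}\|h\|_1^d\big).$$
Here I use $I(g_k)=\prod_j I(s_{x_{k,j}})\le A^{-d}I(h)^d$. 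Since $I(h)^d=e(0,d)\|h\|_1^d$, the right-hand side is at most $N C^{-d}(1+\varepsilon)\,e(0,d)\|h\|_1^d$.

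Combining the two displays yields $N\ge C^d(1-\varepsilon)/(1+\varepsilon)$. Because $\varepsilon\le\tfrac12$, this is at least $C^d(1-2\varepsilon)$, as claimed: indeed $(1-\varepsilon)\ge(1-2\varepsilon)(1+\varepsilon)$ amounts to $2\varepsilon^2\ge0$. The curse of dimensionality then follows by taking, say, $\varepsilon=1/4$. The QMC case is included since QMC weights are non-negative, and $N^+_{\iit}\le N^{\qmc}_{\iit}$.

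The main subtle point is the choice $G=\sum_k g_k$ rather than testing each $g_k$ separately. The non-negativity of the weights and of the $s_y$ is exactly what lets the cross terms be dropped, so that $Q(G)\ge Q(f)$. Some minor care is also needed to check that $h^{\otimes d}$ and the $g_k$ lie in $\cF_d$, which is implicit in assumption 1, and that $I$ factorizes over products (Fubini).
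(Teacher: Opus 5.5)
Your proof is correct and is essentially the paper's argument: it uses the same extremal function $h_d=h^{\otimes d}$, the same sum of ``spline'' products $f^*=\sum_k P_k$ (your $G$), the same positivity step $Q(f^*)\ge Q(h_d)$, and the same triangle-inequality bounds on $I(f^*)$ and $\|f^*\|_d$. The only difference is the final bookkeeping: the paper merges the two error estimates into $(I(h_d)-I(f^*))_+\le 2\max(\|h\|_1^d,\|f^*\|_d)\,e(Q_{\cP,\cA^+};\cF_d)$ and then splits into the cases $N\le C^d$ and $N\ge C^d$, whereas you keep the two estimates separate and obtain the slightly sharper bound $N\ge C^d(1-\varepsilon)/(1+\varepsilon)$, which remains nontrivial for every $\varepsilon\in(0,1)$.
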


\begin{remark}\rm
Assumptions~1 and 2 are readily satisfied for tensor-product problems. The function $h$ from Assumption~3 is usually referred to as {\it worst-case function} since its integral normalized by the norm of $h$ equals the initial error in dimension $d=1$. Together with Assumption~2 this implies that for $h_d(\bsx):=h(x_1)\cdots h(x_d)$ we have $I(h_d/\|h\|_1^d)=e(0,d)$. The functions $s_y$ 
from Assumption~4 can be considered as ``splines'': They have small integrals and norms, but fulfill $s_y(y)=h(y)$. 
\end{remark}

\begin{proof}[Proof of Theorem~\ref{thm1}]
Consider a linear algorithm $Q_{\cP,\cA}$ of the form \eqref{lin:rule} based in nodes $\cP=\{\bsx_1,\ldots,\bsx_N\}$ in $[0,1]^d$ and with non-negative weights $\cA^+=\{a_1,\ldots,a_N\}$. For $k \in \{1,\ldots,N\}$ and $j \in \{1,\ldots,d\}$ let $x_{k,j}$ be the $j$-th coordinate of the point $\bsx_k$. For $k \in \{1,\ldots,N\}$ we define functions 
$$
P_k(\bsx) := s_{x_{k,1}}(x_1) s_{x_{k,2}}(x_2) \cdots s_{x_{k,d}}(x_d),\quad \mbox{for $\bsx=(x_1,\ldots,x_d)\in [0,1]^d$.}  
$$

Consider the two functions $h_d(\bsx):= h(x_1) \cdots h(x_d)$ and $f^*(\bsx):= \sum_{i=1}^N P_i(\bsx)$.
Since $Q_{\cP,\cA^+}$ uses only non-negative weights we have 
\begin{align*}
Q_{\cP,\cA^+}(f^*) = & \sum_{k=1}^N a_k \sum_{j=1}^N P_j(\bsx_k) \ge \sum_{k=1}^N a_k P_k(\bsx_k)
=  \sum_{k=1}^N a_k s_{x_{k,1}}(x_{k,1})\cdots s_{x_{k,d}}(x_{k,d}) \\
= & \sum_{k=1}^N a_k h(x_{k,1}) \cdots h(x_{k,d}) =  \sum_{k=1}^N a_k h_d(\bsx_k) = Q_{\cP,\cA^+}(h_d).
\end{align*}

Now for real $y$ we use the notation $(y)_+:= \max (y, 0)$. Then we have  
\begin{equation}\label{errest1}
e(Q_{\cP,\cA^+};\cF_d)  \ge \frac{(I(h_d) - I(f^*))_+}{2 \max ( \|h\|_1^d, \| f^*\|_{d})},
\end{equation}
which is trivially true if $I(h_d) \le  I(f^*)$ and which is easily shown if $I(h_d) >  I(f^*)$, because then 
\begin{align*}
(I(h_d) - I(f^*))_+ \le & I(h_d) - Q_{\cP,\cA^+}(h_d)+Q_{\cP,\cA^+}(f^*)-I(f^*)\\
\le & \|h_d\|_{d} \, e(Q_{\cP,\cA^+};\cF_{d})+\|f^*\|_{d} \, e(Q_{\cP,\cA^+};\cF_{d})\\
\le & 2 \max(\|h\|_1^d,\|f^*\|_{d}) \, e(Q_{\cP,\cA^+};\cF_{d}),
\end{align*}
where we used that $\|h_d\|_{d} \le \|h\|_1^d$. This implies \eqref{errest1}.

From the triangle inequality we obtain 
$$
\| f^* \|_{d} \le \frac{N \, \|h\|_1^d}{B^d} \qquad \mbox{and} \qquad I(f^*) \le N \left (\frac{I(h)}{A}\right)^d.
$$ 
Inserting into \eqref{errest1} yields 
$$
e(Q_{\cP,\cA^+};\cF_d) \ge \frac{ I(h)^d  (1- N A^{-d})_+} {2 \|h\|_1^d \max (1, 
N B^{-d}) } = \frac{e(0,d) (1- N A^{-d})_+} {2 \max (1, 
N B^{-d}) },
$$
where we used that $(I(h)/\|h\|_1)^d=I(h/\|h\|_1)^d= e(0,1)^d = e(0,d)$.
This yields  
\begin{equation*}
e^+(N,d)\ge \frac{ e(0,d) (1- N A^{-d})_+} {2 \max (1, 
N B^{-d}) }.
\end{equation*}

Now let $\varepsilon \in (0,1/2)$ and assume that $e^+(N,d) \le \varepsilon \, e(0,d)$. This implies that $$2 \varepsilon  \max (1, N B^{-d})  \ge  (1 - N A^{-d})_+.$$

Assumption~4 yields $C:=\min(A,B) >1$. If $N \le C^d$, then we obtain
\begin{align*}
1- N A^{-d}  = & (1 - N A^{-d})_+ \le 2 \varepsilon \max (1, N B^{-d}) =  2 \varepsilon. 
\end{align*}
Hence $N \ge A^d (1-2 \varepsilon) \ge C^d (1-2 \varepsilon)$. If $N \ge C^d$, then we trivially have $N \ge C^d (1-2 \varepsilon)$. This yields $N_{\iit}^+(\varepsilon,d)\ge C^d (1-2 \varepsilon)$, as claimed. \qed
\end{proof}

\section{Examples}\label{sec:exa}

We present several important examples of $L_p$-discrepancies where the method from Sec.~\ref{sec:meth} was successfully applied. In the case of periodic $L_p$-discrepancy, we can also add a new result. Throughout, let $p,q \in [1,\infty]$ be H\"older conjugates, i.e., $\frac1p+\frac1q=1$.

\subsection{Anchored discrepancies} 

A general representation of the anchored discrepancies can best be introduced using the concept of quadrant discrepancy, 
which uses the following class of test sets: Fix $a \in [0,1]$. For $t \in [0,1]$ define the intervals 
$$Q(t)=Q_a(t):=\left\{
\begin{array}{ll}
[0,t) & \mbox{if } t \le a,\\
{[}t,1) & \mbox{if } t > a. 
\end{array}
\right.$$
Since $a$ is fixed we will suppress the index $a$ in the notation. In the multidimensional case, for $\bst=(t_1,\ldots,t_d) \in [0,1]^d$ we define $Q(\bst):=Q(t_1)\times \ldots \times Q(t_d)$. Obviously, the volume is $\lambda(Q(\bst))=\prod_{j=1}^d (t_j\, {\bf 1}_{[0,a]}(t_j)+(1-t_j) \, {\bf 1}_{(a,1]}(t_j))$. Using as test sets the class $\mathcal{C}=\{Q(\bst) : \bst \in [0,1]^d\}$, the $a$-quadrant $L_p$-discrepancy is defined as $$L_{p,N}^{\boxplus_a}(\cP,\cA):=\|\Delta_{\cP,\cA}(Q(\cdot))\|_{L_p([0,1]^d)} \quad \mbox{ for } p \in [1, \infty].$$
The initial $a$-quadrant $L_p$-discrepancy equals $${\rm disc}_p^{\boxplus_a}(0,d)=\left\{ 
\begin{array}{ll}
\left(\frac{a^{p+1}+(1-a)^{p+1}}{p+1}\right)^{d/p} & \mbox{for } p \in [1,\infty),\\[0.5em]
\max(a,1-a)^d & \mbox{for } p=\infty.
\end{array}
\right.$$ 
Note that the initial $a$-quadrant $L_p$-discrepancy is exponentially small in the dimension $d$ for almost all instances. The only exception is the case $p=\infty$, $a \in \{0,1\}$, where the initial discrepancy equals 1.

The general definition of quadrant discrepancy includes two particularly important special cases:

\begin{itemize}
\item {\bf Star discrepancy:} For $a=1$ we obtain the classical star $L_p$-discrepancy, which is the most prominent notion in discrepancy theory. We denote it by $L_{p,N}^{\ast}(\cP,\cA)$ for $N$-element sets $\cP$ with corresponding weights $\cA$. 
Obviously, in the limiting case $a=1$ one would hardly speak of a quadrant, but purely formally the star $L_p$-discrepancy falls under the term quadrant $L_p$-discrepancy.
 
\item {\bf Centered discrepancy:} For $a=\tfrac{1}{2}$ the notion of centered $L_p$-discrepancy is covered. For $p=2$ centered $L_2$-discrepancy was studied by Hickernell~\cite{hick98}. See also \cite{NW01} and  \cite[Sec.~9.8.2 and 11.4.3]{NW10}.  
\end{itemize} 

The general definition of quadrant discrepancy is closely related to another discrepancy concept, which is also categorized under the umbrella term of anchored discrepancy (see \cite[Example~1]{HW12} and \cite[Section~9.5.3]{NW10}) and which uses so-called anchored intervals as test sets. Again, fix $a \in [0,1]$, the anchor. A one-dimensional, in $a$ anchored interval is defined as the set $J(t):=[\min(t,a),\max(t,a))$ for $t \in [0,1]$. In the multidimensional case, for $\bst=(t_1,\ldots,t_d) \in [0,1]^d$ define $J(\bst) :=J(t_1)\times \ldots \times J(t_d)$. Obviously, 
the volume is $\lambda(J(\bst))=\prod_{j=1}^d|t_j-a|$. Using as test sets the class $\mathcal{C}=\{J(\bst) : \bst \in [0,1]^d\}$, the $a$-anchored $L_p$-discrepancy of $\cP$ is defined as $$L_{p,N}^{\pitchfork_a}(\cP,\cA):=\|\Delta_{\cP,\cA}(J(\cdot))\|_{L_p([0,1]^d)} \quad \mbox{ for } p \in [1, \infty].$$

The initial $a$-anchored $L_p$-discrepancy equals $${\rm disc}_p^{\pitchfork_a}(0,d)=\left\{ 
\begin{array}{ll}
\left(\frac{a^{p+1}+(1-a)^{p+1}}{p+1}\right)^{d/p} & \mbox{for } p \in [1,\infty),\\[0.5em]
\max(a,1-a)^d & \mbox{for } p=\infty,
\end{array}
\right.$$ 
which coincides with the initial $a$-quadrant $L_p$-discrepancy.
 
Obviously, the end point $a=0$ recovers the star $L_p$-discrepancy and $a=1$ the $0$-quadrant $L_p$-discrepancy. In general, the close relationship with quadrant discrepancy is described by the following proposition. 
The presented result in its form and generality is new. The special case $p=2$ is implicitly contained in \cite[Sections~9.5.3 and 9.5.4]{NW10} and derived there by means of a discrepancy--integration duality with regard to integration in reproducing kernel Hilbert spaces.

\begin{proposition}\label{pr2}
For $a \in [0,1]$ and a point set $\cP$ in $[0,1)^d$ define $\overline{\cP}_a:=\{\bsa-\bsx \pmod{1} : \bsx \in \cP\}$, where $\bsa=(a,\ldots,a)$ (vector of length $d$). Then we have $$L_{p,N}^{\pitchfork_a}(\cP,\cA)= L_{p,N}^{\boxplus_a}(\overline{\cP}_a,\cA).$$ In particular
\begin{equation*}
{\rm disc}_p^{\pitchfork_a}(N,d)={\rm disc}_p^{\boxplus_a}(N,d).
\end{equation*}
\end{proposition}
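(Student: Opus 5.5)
The plan is to find a measure-preserving reparametrisation $\bst\mapsto\bst'$ of the parameter cube that carries each anchored box onto a quadrant box. Under the reflection $\varphi(x)=a-x \pmod 1$, the anchored interval $J(t)$ should map onto the quadrant interval $Q(t')$ for a suitable $t'$. Volume is then preserved, and membership of $x$ in $J(t)$ is equivalent to membership of $\varphi(x)$ in $Q(t')$. Once we have this, the local discrepancy functions agree after a change of variables, and the $L_p$ norms coincide.

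First I would verify the one-dimensional case. Take $t\le a$. Then $J(t)=[t,a)$, and for $x\in[t,a)$ we get $a-x\in(0,a-t]$. This is nearly the half-open interval $[0,a-t)$, but with endpoints shifted. I would therefore set $t'=a-t\in[0,a]$, so that $Q(t')=[0,a-t)$, and accept that the two sets differ only at the single points $x=a$ versus $x=t$. For $t>a$ we have $J(t)=[a,t)$, and $x\in[a,t)$ gives $a-x\in(a-t,0]$, which modulo~1 is $(1+a-t,1)\cup\{0\}$. So I set $t'=1+a-t\in(a,1)$, giving $Q(t')=[1+a-t,1)$. Again the two sets differ only at boundary points. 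In both cases $|J(t)|=\lambda(Q(t'))$, and $t\mapsto t'$ is a piecewise translation or reflection of $[0,1]$ that preserves Lebesgue measure.

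Next I would extend to $d$ dimensions by applying the map coordinatewise. Since both $J(\bst)$ and $Q(\bst')$ are products, $\lambda(J(\bst))=\lambda(Q(\bst'))$, and ${\bf 1}_{J(\bst)}(\bsx)={\bf 1}_{Q(\bst')}(\overline{\bsx})$ holds except when some coordinate of $\bsx$ lies on a boundary, where $x_j=t_j$ or $x_j=a$. Hence $\Delta_{\cP,\cA}(J(\bst))=\Delta_{\overline{\cP}_a,\cA}(Q(\bst'))$ for all $\bst$ outside a finite union of hyperplanes $\{t_j=x_{k,j}\}$, which is a null set. The troublesome case $x_j=a$ (i.e.\ $\bar x_j=0$) would be handled by a separate check. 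The main obstacle is exactly this endpoint bookkeeping, particularly for $p=\infty$. There I would either interpret the $L_\infty$ norm as an essential supremum, in which case null sets are irrelevant, or else check half-open conventions directly and choose $t'$ as a limit so that the suprema coincide. I would try the essential-supremum reading first.

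Finally, the change of variables $\bst\mapsto\bst'$ is a measure-preserving bijection of $[0,1]^d$ up to null sets, so the $L_p$ norms are equal for every $p\in[1,\infty]$. This gives $L_{p,N}^{\pitchfork_a}(\cP,\cA)=L_{p,N}^{\boxplus_a}(\overline{\cP}_a,\cA)$. Because $\cP\mapsto\overline{\cP}_a$ is a bijection on $N$-element subsets of $[0,1)^d$ (it is an involution) and the weights are unchanged, taking infima over $\cP$ and $\cA$ gives ${\rm disc}_p^{\pitchfork_a}(N,d)={\rm disc}_p^{\boxplus_a}(N,d)$.
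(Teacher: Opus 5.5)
Your reflection is the paper's: it also applies $u\mapsto a-u \pmod 1$ to both nodes and parameters (your $t'$ is its $\overline{t}$) and uses $\lambda(J(t))=\lambda(Q(\overline{t}))$. The genuine gap is the case you postpone to ``a separate check'': a node with some coordinate $\overline{x}_{k,j}=0$, i.e.\ $x_{k,j}=a$ (or $x_{k,j}=0$ if $a=1$). This is not an endpoint effect on a null set of parameters. For $a<1$ and $t_j\in[0,1)$ the one-dimensional factors are ${\bf 1}_{J(t_j)}(a)={\bf 1}[t_j>a]$ and ${\bf 1}_{Q(t_j')}(0)={\bf 1}[t_j<a]$, and these disagree unless $t_j=a$. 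So your ``Hence \dots\ outside a finite union of hyperplanes $\{t_j=x_{k,j}\}$'' drops a set of positive measure.

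No separate check can repair this, because the first identity of the proposition is false for such $\cP$. Take $a=0$, $d=N=1$, $\cP=\{0\}$, $\cA=\{\tfrac12\}$. Then $\Delta_{\cP,\cA}(J(t))=\tfrac12-t$ for $t>0$, so $L_{p,1}^{\pitchfork_0}(\cP,\cA)=\tfrac12(p+1)^{-1/p}$. But $\overline{\cP}_0=\{0\}$ lies in no $Q_0(s)=[s,1)$ with $s>0$, so $L_{p,1}^{\boxplus_0}(\overline{\cP}_0,\cA)=\|1-s\|_{L_p}=(p+1)^{-1/p}$; for $p=\infty$ this is $\tfrac12$ versus $1$. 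Even with QMC weights, $\cP=\{0,0.9\}$ gives squared $L_2$-norms $0.19/3$ versus $0.79/3$. For the quadrant boxes such a node behaves like a node at ``$1^-$'', which no point of $[0,1)$ imitates. The paper's one-line proof has the same hole: its equivalence $x_k\in J(t)\Leftrightarrow\overline{x}_k\in Q(\overline{t})$ fails at $x_k=a$ for all $t\in[0,1)\setminus\{a\}$.

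What your argument does prove is the identity for all $\cP$ with no coordinate $\overline{x}_{k,j}=0$. For such $\cP$ the exceptional parameters lie in $\bigcup_{k,j}\{t_j=x_{k,j}\}\cup\bigcup_j\{t_j=1\}$, a null set; note that at $t=1$ your $t'$ equals $a$, not a point of $(a,1)$. The ``in particular'' statement then needs a step your proposal lacks: both infima must be shown to be attainable over such generic sets, which your involution matches bijectively.

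For $p<\infty$ this is a perturbation argument. For small $\eta>0$, replace every coordinate $x_{k,j}=a$ by $a+\eta$ (for $a=1$, every $x_{k,j}=0$ by $\eta$). This changes $\Delta_{\cP,\cA}(J(\bst))$ only for $\bst\in\bigcup_j\{t_j\in(a,a+\eta]\}$ (resp.\ $\bigcup_j\{t_j\in(0,\eta]\}$), and there by at most $\sum_k|a_k|$. Hence the $L_p$-norm moves by at most $\sum_k|a_k|\,(d\eta)^{1/p}$. On the quadrant side, moving every coordinate $0$ to $\eta$ likewise affects only $\bst\in\bigcup_j\{t_j\in(0,\eta]\}$. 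The weights are untouched, so this works for arbitrary, non-negative and QMC weights alike. For $p=\infty$ the slab estimate gives nothing, and a separate limiting argument is required.
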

  
\begin{proof}
To avoid confusing notation problems, we will restrict ourselves to the one-dimensional case. For $x \in [0,1]$ we write $\overline{x}:=a-x \pmod{1}$. Then for any point $x_k\in \cP$ we have $x_k \in J(t)$ if and only if $\overline{x}_k \in Q(\overline{t})$.  Furthermore, $\lambda(J(t))=\lambda(Q(\overline{t}))$. This yields $\Delta_{\cP,\cA}(J(t))=\Delta_{\cP,\cA}(Q(\overline{t}))$, from which we obtain the result. \qed
\end{proof}

Proposition~\ref{pr2} tells us that all statements about the 
$N$-th minimal quadrant discrepancy (such as curse/tractability) apply directly to the $N$-th minimal anchored discrepancy and vice versa. Furthermore, the discrepancy--integration duality from the forthcoming Theorem~\ref{thm2} can be easily re-written in terms of $a$-anchored $L_p$-discrepancy.\\

\noindent{\bf Discrepancy--integration duality:} For $q \in [1,\infty]$ and $a \in [0,1]$ let 
$$
W_{a,q}^1([0,1]):=\{f:[0,1] \rightarrow \R \ : \ f\, \mbox{ abs. cont., $f(a)=0$ and}\, f' \in L_q([0,1])\},
$$
equipped with the norm $\|f\|_{1,q}:=\|f'\|_{L_q}$. The $d$-fold tensor product space is $$W_{a,q}^{\bsone}:=W_{a,q}^1([0,1]) \otimes \cdots \otimes W_{a,q}^1([0,1])$$ equipped with the crossnorm $\|f\|_{d,q}:=\|f^{(1,1,\ldots,1)}\|_{L_q}$. Then a discrepancy--integration duality is given in the following way:

\begin{theorem}\label{thm2}
We have $$e(Q_{\cP,\cA};W_{a,q}^{\bsone})=L_{p,N}^{\boxplus_a}(\cP,\cA).$$ 
In particular
\begin{equation}\label{eq:did:anch}
e(N,d;W_{a,q}^{\bsone})={\rm disc}_p^{\boxplus_a}(N,d).
\end{equation}
\end{theorem}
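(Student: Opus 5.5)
The proof rests on a Hlawka--Zaremba type identity: express $I(f)-Q_{\cP,\cA}(f)$ as an integral of $f^{(1,\ldots,1)}$ against the local discrepancy function of the quadrant boxes, and then apply H\"older's inequality together with its sharpness. I first treat $d=1$ and then tensorize.

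For $f\in W_{a,q}^1([0,1])$ with $f(a)=0$, every $x\in[0,1]$ satisfies
\[
f(x)=\int_0^1 f'(t)\,\kappa(x,t)\rd t,\qquad \kappa(x,t):=\mathbf{1}_{(x,a]}(t)\cdot(-1)+\mathbf{1}_{(a,x]}(t).
\]
The kernel $\kappa(x,t)$ equals $-{\bf 1}_{[0,t)}(x)$ for $t\le a$ and ${\bf 1}_{[t,1)}(x)$ for $t>a$. Hence $\kappa(x,t)=\sigma(t){\bf 1}_{Q(t)}(x)$ with $\sigma(t)=-1$ for $t\le a$ and $\sigma(t)=+1$ for $t>a$, up to endpoint conventions on null sets, which I will check carefully.

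In dimension $d$, each $f$ in the tensor product space (the completion of finite sums of products, equivalently $f=\int f^{(\bsone)}(\bst)\prod_j\kappa(x_j,t_j)\rd\bst$ with $f^{(\bsone)}\in L_q$) has the representation
\[
f(\bsx)=\int_{[0,1]^d} f^{(\bsone)}(\bst)\,\sigma(\bst)\,{\bf 1}_{Q(\bst)}(\bsx)\rd\bst,\qquad \sigma(\bst)=\prod_j\sigma(t_j).
\]
Applying $I$ and $Q_{\cP,\cA}$ and using Fubini gives
\[
Q_{\cP,\cA}(f)-I(f)=\int_{[0,1]^d} f^{(\bsone)}(\bst)\,\sigma(\bst)\,\Delta_{\cP,\cA}(Q(\bst))\rd\bst.
\]
H\"older's inequality then yields $|I(f)-Q_{\cP,\cA}(f)|\le \|f\|_{d,q}\,L_{p,N}^{\boxplus_a}(\cP,\cA)$.

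For the reverse inequality, note that $g\mapsto \int g\,\sigma\,\Delta$ is a functional on $L_q$. Every $g\in L_q([0,1]^d)$ arises as $f^{(\bsone)}$ for some $f$ in the space, namely $f(\bsx):=\int g\,\sigma\,{\bf 1}_{Q(\bst)}(\bsx)\rd\bst$. The supremum over $\|g\|_{L_q}\le 1$ is therefore the dual norm, which equals $\|\Delta_{\cP,\cA}(Q(\cdot))\|_{L_p}$. For $1<p\le\infty$, i.e.\ $q<\infty$, this is the standard $L_p$--$L_q$ duality. For $p=1$, $q=\infty$, the pairing is realized by $g=\mathrm{sign}(\sigma\Delta)\in L_\infty$. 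For $p=\infty$, $q=1$, the duality $L_1^*\supset$ norming holds because $(L_1)$ norms $L_\infty$: the supremum of $\int g\phi$ over $\|g\|_1\le1$ is $\|\phi\|_\infty$ (essential sup). The remaining subtlety is that the $L_\infty$ norm of the local discrepancy is meant as an essential supremum, or coincides with the pointwise supremum by one-sided continuity of $\Delta$ in $\bst$. This gives the equality of Theorem~\ref{thm2}. Taking the infimum over $\cP,\cA$ on both sides yields \eqref{eq:did:anch}.

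The main obstacle is to make the identification of the tensor product space precise: every $f$ has the integral representation with $f^{(\bsone)}\in L_q$, and conversely every $g\in L_q$ produces an admissible $f$ with $\|f\|_{d,q}=\|g\|_{L_q}$. This requires care for $q=\infty$, where the algebraic tensor product is not dense and one must define the space through the integral representation. A further check is that the null-set ambiguities at $t_j=a$ and at endpoints do not affect point evaluation. They do not, because ${\bf 1}_{Q(\bst)}(\bsx)$ is well defined for every fixed $\bsx$, and changing it on a $\bst$-null set leaves the integral unchanged.
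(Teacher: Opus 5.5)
Your proposal is correct and follows essentially the same route as the paper. Both use a Zaremba-type identity that writes $I(f)-Q_{\cP,\cA}(f)$ as $\int f^{(1,\ldots,1)}$ against the quadrant local discrepancy, then H\"older's inequality for the upper bound and $L_p$--$L_q$ norming for the lower bound; the paper writes out the extremal functions ($|\Delta|^{p-2}\Delta$, $\mathrm{sgn}\,\Delta$, a normalized indicator of $E_\varepsilon$) that you invoke abstractly. Your sign factor $\sigma(\bst)$ is in fact more careful than the paper's representation $f(\bsx)=\int_{[\bsx_\uu,\bsa_\uu]}\int_{[\bsa_{\uu^c},\bsx_{\uu^c}]}f^{(1,\ldots,1)}$, which omits the factor $(-1)^{|\uu|}$; since $|\sigma|=1$, this does not change the conclusion.
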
 

A proof for general $a$ and $p=2$ can be found in \cite[Sec.~9.5.4]{NW10}. For general $p$ and $a =1$ see \cite[Sec.~9.8.1]{NW10} and for general $p$ and $a=\tfrac{1}{2}$ see \cite{NW01} or \cite[Sec.~11.4.3]{NW10}. Nevertheless, we nowhere found a complete unifying proof of the general setting stated above in the literature. Therefore we add the proof here.

\begin{proof}[Proof of Theorem~\ref{thm2}]
For a vector $\bsx=(x_1,\ldots,x_d) \in [0,1]^d$ let $\uu \subseteq \{1,\ldots,d\}=:[d]$ be the set of indices $j$ for which $x_j \le a$. Hence, for $j \in \uu^c$, where $\uu^c:=[d]\setminus \uu$, we have $x_j >a$. We write $\bsx_{\uu}$ for the projection of the vector $\bsx$ to the coordinates whose indices belong to $\uu$. Furthermore, let $\bsa=(a,\ldots,a)$ (length $d$).  Then $$f(\bsx)=\int_{[\bsx_\uu,\bsa_\uu]} \int_{[\bsa_{\uu^c},\bsx_{\uu^c}]} f^{(1,\ldots,1)}(\bst) \rd \bst.$$ 

Let $\mu:=\lambda-\sum_{k=1}^N a_k \delta_{\bsx_k}$ where $\delta_{\bsx_k}$ is the Dirac measure concentrated in $\bsx_k$ and $\lambda$ the Lebesgue measure on $[0,1]^d$ and let $L(f):= I(f)-Q_{\cP,\cA}(f)$. Then we have
\begin{eqnarray*}
L(f) & = & \int_{[0,1]^d} f(\bsx) \rd \mu(\bsx) = \sum_{\uu \subseteq [d]} \int_{[\bszero_\uu,\bsa_\uu]} \int_{[\bsa_{\uu^c},\bsone_{\uu^c}]} f(\bsx) \rd \mu(\bsx)\\
& = & \sum_{\uu \subseteq [d]} \int_{[\bszero_\uu,\bsa_\uu]} \int_{[\bsa_{\uu^c},\bsone_{\uu^c}]} \int_{[\bsx_\uu,\bsa_\uu]} \int_{[\bsa_{\uu^c},\bsx_{\uu^c}]} f^{(1,\ldots,1)}(\bst) \rd \bst \rd \mu(\bsx)\\
& = & \sum_{\uu \subseteq [d]} \int_{[\bszero_\uu,\bsa_\uu]} \int_{[\bsa_{\uu^c},\bsone_{\uu^c}]}  f^{(1,\ldots,1)}(\bst) \int_{[\bszero_\uu,\bst_\uu]} \int_{[\bst_{\uu^c},\bsone_{\uu^c}]}  \rd \mu(\bsx) \rd \bst\\
& = & \sum_{\uu \subseteq [d]} \int_{[\bszero_\uu,\bsa_\uu]} \int_{[\bsa_{\uu^c},\bsone_{\uu^c}]}  f^{(1,\ldots,1)}(\bst) \int_{Q(\bst)}  \rd \mu(\bsx) \rd \bst\\
& = & \int_{[0,1]^d} f^{(1,\ldots,1)}(\bst) \mu(Q(t)) \rd \bst.
\end{eqnarray*}
Hence $$|L(f)| \le \|f^{(1,\ldots,1)}\|_{L_q} \|\mu(Q(\cdot))\|_{L_p} = \|f\|_{d,q} L_{p,N}^{\boxplus_a}(\cP,\cA),$$ because 
$$\mu(Q(\bst))=\lambda(Q(\bst))-\sum_{k=1}^N a_k {\bf 1}_{Q(\bst)}(\bsx_k)=-\Delta_{\cP,\cA}(Q(\bst))$$ and hence $$\|\mu(Q(\cdot))\|_{L_p} = \|\Delta_{\cP,\cA}(Q(\cdot))\|_{L_p}=L_{p,N}^{\boxplus_a}(\cP,\cA).$$ This yields $$e(Q_{\cP,\cA};W_{a,q}^{\bsone}) \le L_{p,N}^{\boxplus_a}(\cP,\cA).$$ 

In order to prove equality let first $p,q \in (1,\infty)$. Set $$f_\ast(\bsx):=-\frac{1}{\|\Delta_{\cP,\cA}(Q(\cdot))\|_{L_p}^{p-1}} \int_{[\bsa,\bsx]} |\Delta_{\cP,\cA}(Q(\bst))|^{p-2} \Delta_{\cP,\cA}(Q(\bst)) \rd \bst.$$ Then we have $f_\ast(\bsx)=0$ whenever at least one component of $\bsx$ equals $a$, and $f_{\ast}^{(1,\ldots,1)}(\bsx) =- \frac{|\Delta_{\cP,\cA}(Q(\bsx))|^{p-2} \Delta_{\cP,\cA}(Q(\bsx))}{\|\Delta_{\cP,\cA}(Q(\cdot))\|_{L_p}^{p-1}}$. Hence 
\begin{eqnarray*}
L(f_\ast)  & = & -\int_{[0,1]^d} f_{\ast}^{(1,\ldots,1)}(\bst) \Delta_{\cP,\cA}(Q(\bst))\rd \bst \\
& = & \frac{1}{\|\Delta_{\cP,\cA}(Q(\cdot))\|_{L_p}^{p-1}}\int_{[0,1]^d} |\Delta_{\cP,\cA}(Q(\bst))|^p \rd \bst\\
& = & \frac{\|\Delta_{\cP,\cA}(Q(\cdot))\|_{L_p}^p}{\|\Delta_{\cP,\cA}(Q(\cdot))\|_{L_p}^{p-1}}=\|\Delta_{\cP,\cA}(Q(\cdot))\|_{L_p} = L_{p,N}^{\boxplus_a}(\cP,\cA).
\end{eqnarray*}
Furthermore, using $(p-1)q=p$ for H\"older conjugates $p,q \in (1,\infty)$,
\begin{eqnarray*}
\|f_{\ast}\|_{d,q}^q & = & \| f_{\ast}^{(1,\ldots,1)} \|_{L_q}^q = \int_{[0,1]^d} \frac{|\Delta_{\cP,\cA}(Q(\bst))|^{(p-1) q}}{\|\Delta_{\cP,\cA}(Q(\cdot))\|_{L_p}^{(p-1) q}} \rd \bst\\
& = & \frac{1}{\|\Delta_{\cP,\cA}(Q(\cdot))\|_{L_p}^p} \|\Delta_{\cP,\cA}(Q(\cdot))\|_{L_p}^p = 1.
\end{eqnarray*}
This yields $e(Q_{\cP,\cA};W_{a,q}^{\bsone}) \ge L_{p,N}^{\boxplus_a}(\cP,\cA)$. Together with the upper bound we obtain the desired result.

For $p=1$, $q=\infty$ choose $f_{\ast}(x):=-\int_{[\bsa,\bsx]} {\rm sgn} ( \Delta_{\cP,\cA}(Q(\bst)) ) \rd \bst$. Then, again $L(f_{\ast})=L_{1,N}^{\boxplus_a}(\cP,\cA)$ and $\|f_{\ast}\|_{1,\infty}=1$ such that the result follows in the same way as above.

For $p=\infty$, $q=1$ and $\varepsilon >0$ set $$E_{\varepsilon}:=\left\{\bst \in [0,1]^d : | \Delta_{\cP,\cA}(Q(\bst))| \ge L_{\infty,N}^{\boxplus_a}(\cP,\cA) - \varepsilon\right\}$$ and $$f_{\ast}(\bsx):= -\int_{[\bsa,\bsx]} \frac{{\rm sgn}(\Delta_{\cP,\cA}(Q(\bst))) {\bf 1}_{E_{\varepsilon}}(\bst)}{\lambda(E_{\varepsilon})} \rd \bst.$$ Then, $f_{\ast}(\bsx)=0$ whenever at least one component of $\bsx$ equals $a$, and $f_{\ast}^{(1,\ldots,1)}(\bsx)
- \frac{{\rm sgn}(\Delta_{\cP,\cA}(Q(\bsx))) {\bf 1}_{E_{\varepsilon}}(\bsx)}{\lambda(E_{\varepsilon})}$, which yields $\|f_{\ast}\|_{d,1} = \|f_{\ast}^{(1,\ldots,1)}\|_{L_1}=1$ and $$L(f_{\ast})= \int_{[0,1]^d} \frac{ {\bf 1}_{E_{\varepsilon}}(\bst)}{\lambda(E_{\varepsilon})} | \Delta_{\cP,\cA}(Q(\bst))| \rd \bst \ge L_{\infty,N}^{\boxplus_a}(\cP,\cA) - \varepsilon.$$  Letting $\varepsilon \rightarrow 0$ yields $e(Q_{\cP,\cA};W_{a,1}^{\bsone}) \ge L_{\infty,N}^{\boxplus_a}(\cP,\cA)$ and this finishes the proof. \qed
\end{proof}
 
\noindent{\bf Tractablility vs. curse of dimensionality:} We distinguish the cases $a \in \{0,1\}$ and $a \in (0,1)$. 
\begin{description}
\item[$a=1$:] This is the case of classical star $L_p$-discrepancy. For $p \in (1,\infty)$ the star $L_p$-discrepancy for non-negative weights $\cA^+$ suffers from the curse of dimensionality. This was shown in \cite{NP25} with the method from Theorem~\ref{thm1} based on the worst-case function $h(x)=1-(1-x)^p$. For $p=2$ this was already shown much earlier by Wo\'{z}niakowski~\cite{Wo99} (nowadays this paper, together with \cite{hnww} could be seen as the initiation of the whole story considered in the present paper). The case $a=0$ is almost identical.

The case $p=1$ is still open.  

For $p=\infty$ the star $L_{\infty}$-discrepancy is polynomially tractable as shown first in a 
paper by Heinrich, Novak, Wasilkowski and Wo\'{z}niakow\-ski~\cite{hnww} (the result even holds for QMC weights $\cA^{\qmc}$). More precisely there exists a $C>0$ such that 
\begin{equation}\label{bd:HNWW}
N_{\infty}^{\ast,\qmc}(\varepsilon,d) \le C d \varepsilon^{-2} \quad \mbox{ for all $d \in \mathbb{N}$ and $\varepsilon \in (0,1)$.}
\end{equation}
The linear dependence in $d$ cannot be improved, see \cite[Theorem~8]{hnww} or \cite[Theorem~1]{Hi04} for details. 
One may choose $C=6.067\ldots$, see \cite{Wei26}.

\item[$a \in (0,1)$:] For $p \in [1,\infty)$ the $a$-anchored $L_p$-discrepancy suffers from the curse of dimensionality (here the case $p=1$ is included). This was shown in \cite[Corollary~2]{NP25b}, where the proof is based on the worst-case function $h(x)={\bf 1}_{[0,a]}(x) (a^p-x^p) +{\bf 1}_{[a,1]}(x)((1-a)^p-(1-x)^p)$. 

For $p=\infty$ and $a=\tfrac{1}{2}$ the $\frac{1}{2}$-quadrant (=centered) $L_{\infty}$-discrepancy suffers also from the curse of dimensionality. This follows immediately from the fact that for $N$-element sets $\cP$ with $N <2^d$ at least one of the $2^d$ quadrants of the unit-cube $[0,1)^d$ remains empty and hence $\di_{\infty}^{\boxplus_{1/2}}(N,d)= 2^{-d} = \di_{\infty}^{\boxplus_{1/2}}(0,d)$ (this is already pointed out in \cite[p.~406]{NW01} and \cite[p.~177]{NW10}). 

The case $p=\infty$ but $a \not\in \{0,\frac{1}{2},1\}$ was still open, see the following remark.
\end{description}

\begin{remark}\rm
Here we consider the ``intermediate case'' $p=\infty$ and $a \in (\frac12,1)$, where the problem is not polynomially tractable 
but there is also no curse since the complexity is polynomial for each fixed 
$\varepsilon >0$; of course the case $a \in (0,\frac12)$ is identical. 

We start with the lower bound and use the discrepancy-integration duality from Theorem~\ref{thm2}, i.e., we consider integration in $W_{a,1}^{\bsone}$. 
For $d=1$ we can define two fooling functions in $W_{a,1}^1$ with disjoint supports 
and integrals arbitrarily close to $a$ and $1-a$, i.e., the normalized integrals are close to 1 and $\frac{1-a}{a}$, 
respectively, since the initial error is exactly $a$ in the present case. 
For $d>1$ we take tensor products and, for $\varepsilon_k = \left(\frac{1-a}{a}\right)^k$ and $d \ge k$, we obtain 
${d \choose k}$ functions with disjoint supports and integrals close to $\varepsilon_k$, 
hence $N_{\infty}^{\boxplus_a}(\varepsilon_k,d)=N_{{\rm int}}^{W_{a,1}^{\bsone}}(\varepsilon_k, d) \ge {d \choose k}$, 
since for any $N$-element set in $[0,1)^d$ with less then ${d \choose k}$ points, the support of at least one of the functions contains no point of the point set. 
It follows that the integration problem, and therefore the $L_{\infty}^{\boxplus_a}$-discrepancy  is not polynomially tractable. 

Now we consider the upper bound for a given $d$ and normalized error $\varepsilon_k = \left(\frac{1-a}{a}\right)^k$.
By the break point $a$ we have, in dimension $d$, altogether $2^d$ subproblems, defined on subcubes $\prod_{j \in \uu}[a,1) \times \prod_{j \in [d]\setminus \uu} [0,a)$ of $[0,1)^d$ for $\uu \subseteq [d]$ of volume $(1-a)^{|\uu|} a^{d-|\uu|}$ whose normalization is $\left(\frac{1-a}{a}\right)^{|\uu|}$. We have to consider only those problems with initial error at least $\varepsilon_k$, i.e., those $\uu \subseteq [d]$ for which $|\uu| \le k$. The number of these problems is $\sum_{\ell=0}^k {d \choose \ell}$, which is a polynomial in $d$ of degree $k$. No sampling point is needed in all the other subcubes. Due to the known upper bound $N \le C  d  \varepsilon^{-2}$ for the classical star $L_{\infty}$-discrepancy from \eqref{bd:HNWW}, we need at most  $N \le \widetilde C  d^{k+1}  \varepsilon_k^{-2}$ function values in the large subcubes to solve the problem, with another constant $\widetilde C >0$. Hence the number of points increases only polynomially in $d$, as long as $\varepsilon$ is fixed. 
\end{remark}

\subsection{Unanchored discrepancies} 

Unanchored discrepancies are characterized by the fact that all rectangles in the hypercube or on the torus are used as test sets.

\subsubsection{Extreme discrepancy}

The most prominent representative in the class of unanchored discrepancies is the extreme discrepancy, which uses arbitrary axis-parallel boxes as test sets. Let $\DD_d:=\{(\bsx,\bsy): \bsx,\bsy \in [0,1]^d, \bsx \le \bsy\}$, where $\le$ is understood component-wise, and, for $(\bsx,\bsy) \in \DD_d$ let $[\bsx,\bsy)=[x_1,y_1)\times [x_2,y_2) \times \ldots \times [x_d,y_d)$ with volume $\lambda([\bsx,\bsy))=(y_1-x_1)\cdots (y_d-x_d)$. Using as test sets the class $\mathcal{C}=\{[\bsx,\bsy) : (\bsx,\bsy) \in \DD_d\}$, the extreme $L_p$-discrepancy of $\cP$ is defined as
$$L_{p,N}^{\ex}(\cP):=\|\Delta_{\cP}([\cdot,\cdot))\|_{L_p(\DD_d)} \quad \mbox{ for } p \in [1,\infty].$$
The initial extreme $L_p$-discrepancy is
$${\rm disc}_p^{\ex}(0,d)=\left\{ 
\begin{array}{ll}
((p+1)(p+2))^{-d/p} & \mbox{for } p \in [1,\infty),\\
1 & \mbox{for } p=\infty.
\end{array}
\right. 
$$

\noindent{\bf Discrepancy--integration duality:} Define a linear operator $T_d$ on $L_q(\DD_d)$ via 
\begin{equation}\label{eq:rep}
(T_d c)(\bst):=\int_{\DD_d} c(\bsx,\bsy)\, {\bf 1}_{[\bsx,\bsy]}(\bst) \rd (\bsx,\bsy)
\quad\text{for }c\in L_q(\DD_d).
\end{equation}
Define $$F_{d,q}:=\{f \ : \ c\in L_q(\DD_d), \ f=T_dc\}$$ and equip this space with the norm 
\begin{equation}\label{eq:norm-rep}
\|f\|_{F_{d,q},\bx}:=\inf\{\|c\|_{L_q(\DD_d)}:\;c\in L_q(\DD_d), \ f=T_dc\}.
\end{equation}
Then a discrepancy--integration duality is known from \cite[Theorem~2]{NP26a}:

\begin{theorem}\label{thm3}
We have
\[
e(Q_{\cP,\cA};F_{d,q}) \;=\; L_{p,N}^{\ex}(\cP,\cA).
\]
In particular
\begin{equation}\label{eq:did:unanch}
e(N,d;F_{d,q})={\rm disc}_p^{\ex}(N,d).
\end{equation}
\end{theorem}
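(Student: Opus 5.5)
The plan is to follow the pattern of the proof of Theorem~\ref{thm2}: express the error functional $L(f)=I(f)-Q_{\cP,\cA}(f)$ through the representer $c$, obtain the upper bound by H\"older's inequality, and get equality by exhibiting (near-)extremal representers. The infimum in the norm \eqref{eq:norm-rep} costs nothing, because the upper bound holds for every representer.

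\emph{Step 1: the error for $f=T_dc$.} Let $\mu:=\lambda-\sum_{k=1}^N a_k\delta_{\bsx_k}$ as before. By Fubini (justified since $\DD_d$ has finite measure, $c\in L_q\subseteq L_1$, and the indicator is bounded),
\[
L(T_dc)=\int_{[0,1]^d}\int_{\DD_d} c(\bsx,\bsy){\bf 1}_{[\bsx,\bsy]}(\bst)\rd(\bsx,\bsy)\rd\mu(\bst)=\int_{\DD_d} c(\bsx,\bsy)\,\mu([\bsx,\bsy])\rd(\bsx,\bsy).
\]
Since $\mu([\bsx,\bsy])=-\Delta_{\cP,\cA}([\bsx,\bsy])$, we get
\[
L(T_dc)=-\int_{\DD_d} c\,\Delta_{\cP,\cA}([\cdot,\cdot]).
\]
One subtlety needs care here. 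The paper uses half-open boxes $[\bsx,\bsy)$, while $T_d$ uses closed boxes. For the Lebesgue part the two give the same volume. For the Dirac part they differ only on the set of pairs $(\bsx,\bsy)$ with some $x_j$ or $y_j$ equal to a coordinate of some node $\bsx_k$. This set is a finite union of hyperplane pieces and hence a null set in $\DD_d$, so the $L_p$-norms coincide. Also, $T_dc$ is defined pointwise everywhere, so point evaluations are meaningful.

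\emph{Step 2: upper bound.} H\"older's inequality gives
\[
|L(T_dc)|\le \|c\|_{L_q(\DD_d)}\,L_{p,N}^{\ex}(\cP,\cA).
\]
Taking the infimum over all representers $c$ of $f$ yields $|L(f)|\le \|f\|_{F_{d,q},\bx}\,L^{\ex}_{p,N}(\cP,\cA)$, and hence $e\le L^{\ex}_{p,N}$.

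\emph{Step 3: lower bound.} Write $\Delta:=\Delta_{\cP,\cA}([\cdot,\cdot])$.
\begin{itemize}
\item For $p\in(1,\infty)$, take $c_\ast:=-|\Delta|^{p-2}\Delta/\|\Delta\|_{L_p}^{p-1}$ and $f_\ast=T_dc_\ast$. Then $\|f_\ast\|\le\|c_\ast\|_{L_q}=1$ by $(p-1)q=p$, and $L(f_\ast)=\|\Delta\|_{L_p}$.
\item For $p=1$, take $c_\ast=-{\rm sgn}\,\Delta$.
\item For $p=\infty$, take $c_\ast=-{\rm sgn}(\Delta){\bf 1}_{E_\varepsilon}/\lambda(E_\varepsilon)$ with $E_\varepsilon$ the set where $|\Delta|\ge\|\Delta\|_\infty-\varepsilon$, and let $\varepsilon\to0$.
\item If $\Delta=0$ a.e., the lower bound is trivial.
\end{itemize}
In each case the norm bound comes only from $\|f_\ast\|\le\|c_\ast\|$, and no exact computation of the infimum is needed.

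The main obstacle is not the duality computation but making sure $F_{d,q}$ with the quotient norm is a genuine normed space with the claimed properties. If $T_d$ has a nontrivial kernel, the quotient norm is still a norm on the range, so this is harmless. The boundary/null-set issue between open and closed boxes (Step 1) is the other point to handle carefully. The identity \eqref{eq:did:unanch} then follows by taking the infimum over $\cP$ and $\cA$.
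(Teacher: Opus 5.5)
Your proposal is correct and takes essentially the same route as the paper. The paper cites this theorem from the literature rather than proving it, but it carries out the identical argument for Theorem~\ref{thm2} and for the periodic analogue: Fubini gives $L(T_dc)=-\int_{\DD_d} c\,\Delta_{\cP,\cA}([\cdot,\cdot])$, H\"older with the infimum over representers gives the upper bound, and the extremal representers built from $|\Delta|^{p-2}\Delta$, ${\rm sgn}\,\Delta$, and the normalized indicator of $E_\varepsilon$ give the matching lower bound. You also handle two details the paper glosses over: you track the sign explicitly, and you note that closed and half-open boxes differ only on a null subset of $\DD_d$.
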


\noindent{\bf Tractablility vs. curse of dimensionality:}  For $p \in (1,\infty)$ the extreme $L_p$-discrepancy suffers from the curse of dimensionality. This was shown in \cite{NP26a} with the method from Theorem~\ref{thm1} based on the worst-case function $h(x)=1-x^{p+1}-(1-x)^{p+1}$. The case $p=1$ is still open. For $p=\infty$ the extreme $L_{\infty}$-discrepancy is polynomially tractable as shown in \cite{hnww} (see also \cite{Gne05}).

\subsubsection{Periodic discrepancy} 
Another important representative in the class of unanchored discrepancies is the periodic discrepancy, which is also known as wrap-around discrepancy or torus-discrepancy. Here periodic rectangles are used as test sets. For $x,y\in [0,1]$ set
$$ I(x,y):=\begin{cases}
           [x,y) & \text{if $x\leq y$}, \\
           [0,y)\cup [x,1)& \text{if $x>y$,}
          \end{cases}$$
and for $(\bsx,\bsy) \in [0,1]^{2 d}$ we set $B(\bsx,\bsy):=I(x_1,y_1) \times \ldots \times I(x_d,y_d)$ with volume $\lambda(B(\bsx,\bsy))=\prod_{j=1}^d(y_j-x_j \pmod{1})$. Using as test sets the class $\mathcal{C}=\{B(\bsx,\bsy) : (\bsx,\bsy) \in [0,1]^{2 d}\}$, the periodic $L_p$-discrepancy of $\cP$ is defined as
$$L_{p,N}^{\per}(\cP,\cA):=\|\Delta_{\cP,\cA}(B(\cdot,\cdot))\|_{L_p([0,1]^{2 d})} \quad \mbox{ for } p \in [1,\infty].$$
The initial periodic $L_p$-discrepancy is 
$${\rm disc}_p^{\per}(0,d)=\left\{ 
\begin{array}{ll}
(p+1)^{-d/p} & \mbox{for } p \in [1,\infty),\\
1 & \mbox{for } p=\infty,
\end{array}
\right. 
$$
which coincides with the initial star $L_p$-discrepancy.

It is easily seen (or see \cite{KP}) that $L_{p,N}^{{\rm per}}(\cP,\cA) \ge L_{p,N}^{\ex}(\cP,\cA)$ for every $p\ge 1$.\\

\noindent{\bf Discrepancy--integration duality:} First consider the special case $p=q=2$, 
for which a discrepancy--integration duality is well known. Let 
$$
H^1_{\per,2}([0,1]):=\{f:[0,1] \rightarrow \R \ : \  f\, \mbox{abs. cont., $f(0)=f(1)$ and}\, f' \in L_2([0,1])\}
$$
equipped with the norm 
$$
\|f\|_{1,2}^{{\rm per}}:=\left(3 \left(\int_0^1 f(x) \rd x\right)^2+\frac{1}{2} \|f'\|_{L_2}^2  \right)^{1/2}.
$$ 
The $d$-fold tensor product space is $$H_{\per,2}^{\bsone}:=H_{\per,2}^1([0,1]) \otimes \cdots \otimes H_{\per,2}^1([0,1])$$ equipped with the corresponding crossnorm $\|f\|_{d,2}^{{\rm per}}$. Then we know from \cite{HO}:

\begin{theorem}
We have $$e(Q_{\cP,\cA};H_{\per,2}^{\bsone})= L_{2,N}^{\per}(\cP,\cA).$$ In particular, $$e(N,d;H_{\per,2}^{\bsone})={\rm disc}_2^{\per}(N,d).$$ 
\end{theorem}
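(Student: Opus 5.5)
The plan is to use the standard reproducing kernel Hilbert space identity for the worst-case error and match it with an explicit formula for the squared periodic $L_2$-discrepancy. First I determine the reproducing kernel of $H^1_{\per,2}([0,1])$ with the norm $\|f\|_{1,2}^{\per}$. I will work with the Fourier basis $e_m(x)=\exp(2\pi i m x)$. For periodic $f$ with coefficients $\hat f(m)$, we have $\int_0^1 f = \hat f(0)$ and $\|f'\|_{L_2}^2=\sum_{m\neq 0}4\pi^2 m^2|\hat f(m)|^2$. Hence the squared norm equals $3|\hat f(0)|^2+\sum_{m\ne 0}2\pi^2 m^2|\hat f(m)|^2$, and the kernel is
$$K_1(x,y)=\frac13+\sum_{m\neq 0}\frac{e_m(x-y)}{2\pi^2m^2}=\frac13+B_2(\{x-y\}),$$
with $B_2(t)=t^2-t+\tfrac16$. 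Written out, this is $K_1(x,y)=\tfrac12-\{x-y\}(1-\{x-y\})$. In dimension $d$ the kernel is the product $K_d(\bsx,\bsy)=\prod_{j=1}^d K_1(x_j,y_j)$, since the crossnorm makes the tensor product space an RKHS with product kernel.

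Second, I apply the well-known formula for the worst-case error in an RKHS:
$$e(Q_{\cP,\cA};H^{\bsone}_{\per,2})^2=\int\!\!\int K_d-2\sum_k a_k\int K_d(\bsx,\bsx_k)\rd\bsx+\sum_{k,l}a_ka_lK_d(\bsx_k,\bsx_l).$$
Since $\int_0^1 K_1(x,y)\rd x=\tfrac13$, this becomes
$$3^{-d}-2\cdot 3^{-d}\sum_k a_k+\sum_{k,l}a_ka_l\prod_j K_1(x_{k,j},x_{l,j}).$$

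Third, I expand $L_{2,N}^{\per}(\cP,\cA)^2=\int_{[0,1]^{2d}}\bigl(\sum_k a_k{\bf 1}_{B(\bsx,\bsy)}(\bsx_k)-\lambda(B(\bsx,\bsy))\bigr)^2\rd(\bsx,\bsy)$, and everything factorizes over coordinates.

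The one-dimensional integrals I need are the following. The integral $\int\!\!\int \lambda(I(x,y))^2=\int_0^1 t^2\rd t=\tfrac13$. For any fixed $z$, $\int\!\!\int \lambda(I(x,y)){\bf 1}_{I(x,y)}(z)\rd x\rd y=\int_0^1 t\cdot t\rd t=\tfrac13$, using translation invariance and the fact that a periodic interval of length $t$ contains $z$ for a set of left endpoints of measure $t$. Finally,
$$\int\!\!\int{\bf 1}_{I(x,y)}(z){\bf 1}_{I(x,y)}(w)\rd x\rd y=\int_0^1(t-\delta)_+ +(t-(1-\delta))_+\rd t,$$
where $\delta=\{z-w\}$. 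This evaluates to $\tfrac{(1-\delta)^2+\delta^2}{2}=\tfrac12-\delta(1-\delta)=K_1(z,w)$.

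Substituting these gives exactly the same expression as the RKHS error formula, which proves the first identity. The second identity follows by taking the infimum over $\cP,\cA$ on both sides.

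The main obstacle is the bookkeeping in the third step: checking that the overlap integral of two periodic indicators reproduces the kernel exactly, including the constant $\tfrac13$ term. This constant term is what forces the weight $3$ in the norm, while the derivative term forces the factor $\tfrac12$. I will verify it carefully by computing the measure of $\{(x,t): z,w\in I(x,x+t)\}$ for fixed length $t$. Everything else is routine, and the factorization is justified by Fubini.
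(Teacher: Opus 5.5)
Your proof is correct. The kernel $K_1(x,y)=\frac13+B_2(\{x-y\})=\frac12-\delta(1-\delta)$ is the right one for the norm $3(\int f)^2+\frac12\|f'\|_{L_2}^2$. The three one-dimensional integrals $\frac13$, $\frac13$ and $\frac{(1-\delta)^2+\delta^2}{2}$ all check out. Hence both sides equal $3^{-d}-2\cdot 3^{-d}\sum_k a_k+\sum_{k,l}a_ka_l\prod_j K_1(x_{k,j},x_{l,j})$.

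The paper gives no proof of this statement; it simply quotes it from \cite{HO}. The argument it does give for the neighbouring general-$p$ duality is of a different kind. It uses the representation $f=T_d^{\per}c$, Fubini to get $L(f)=\int c\,\Delta_{\cP,\cA}(B(\cdot,\cdot))$, H\"older, and an explicit extremal $c^\ast$, all for the abstractly normed space $F_{d,q}^{\per}$.

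Your kernel-matching route instead works directly with the concrete Sobolev norm. It explains where the constants $3$ and $\frac12$ come from and yields the closed formula for $L_{2,N}^{\per}$. The paper's route covers all $p$ at once, but by itself it does not identify $F_{d,2}^{\per}$ with $H_{\per,2}^{\bsone}$.

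Your overlap computation closes exactly that gap. $F_{d,2}^{\per}$ is the range of $T_d^{\per}$ with the quotient norm, so it is the reproducing kernel Hilbert space with kernel $\int {\bf 1}_{B(\bsx,\bsy)}(\bst)\,{\bf 1}_{B(\bsx,\bsy)}(\bst')\rd(\bsx,\bsy)$. By your calculation this kernel equals $\prod_j K_1(t_j,t_j')$. Hence $F_{d,2}^{\per}=H_{\per,2}^{\bsone}$ isometrically, and the present theorem is just the case $p=q=2$ of the paper's general periodic duality.
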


Now consider the case of general H\"older conjugates  $p,q \in [1,\infty]$. Similarly to \eqref{eq:rep} define a linear operator $T_d^{\per}$ on $L_q([0,1]^{2d})$ via 
\begin{equation*}
(T_d^{\per} c)(\bst):=\int_{[0,1]^{2d}} c(\bsx,\bsy)\, {\bf 1}_{B(\bsx,\bsy)}(\bst) \rd (\bsx,\bsy)
\quad\text{for }c\in L_q([0,1]^{2d}).
\end{equation*}
Define $$F_{d,q}^{\per}:=\{f \ : \ c\in L_q([0,1]^{2d}), \, f=T_d^{\per} c\}$$ and equip this space with the norm 
\begin{equation*}
\|f\|_{F_{d,q}^{\per}}:=\inf\{\|c\|_{L_q([0,1]^{2d})}:\;c\in L_q([0,1]^{2d}), \ f=T_d^{\per}c\}.
\end{equation*}

\begin{remark}\rm
For every $c \in L_q([0,1]^{2d})$ the function $f=T_d^{\per} c$ is well defined a.e., belongs to $L_q([0,1]^d)$, is absolutely continuous in each variable, and is periodic with period 1 in every coordinate. For example, in the univariate case, 
\begin{equation*}
(T_1^{\per}c)(0) = \int_0^1 \int_y^1 c(x,y) \rd x \rd y = \int_0^1 \int_0^x c(x,y) \rd y \rd x = (T_1^{\per}c)(1). 
\end{equation*}
\end{remark}

Now a discrepancy--integration duality is given in the following way:

\begin{theorem}
We have $$e(Q_{\cP,\cA};F_{d,q}^{\per})=L_{p,N}^{\per}(\cP,\cA).$$ In particular, $$e(N,d;F_{d,q}^{\per})={\rm disc}_p^{\per}(N,d).$$
\end{theorem}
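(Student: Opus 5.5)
We will prove the duality by a Fubini argument directly adapted to the representation $f=T_d^{\per}c$, in the same spirit as the proof of Theorem~\ref{thm2}. As there, set $\mu:=\lambda-\sum_{k=1}^N a_k\delta_{\bsx_k}$ and $L(f):=I(f)-Q_{\cP,\cA}(f)=\int_{[0,1]^d} f\rd\mu$.

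\textbf{Upper bound.} Let $f=T_d^{\per}c$ with $c\in L_q([0,1]^{2d})$. The function $f$ is defined pointwise as the integral in the definition of $T_d^{\per}$. At a node $\bsx_k$ this integral is finite by H\"older's inequality, since $\|{\bf 1}_{B(\cdot,\cdot)}(\bsx_k)\|_{L_p}\le 1$. We may therefore interchange the integral over $(\bsx,\bsy)$ with the finite signed measure $\mu$; Fubini applies because $|c|\in L_1([0,1]^{2d})$ and the indicator is bounded. This gives
\[
L(f)=\int_{[0,1]^{2d}} c(\bsx,\bsy)\,\mu(B(\bsx,\bsy))\rd(\bsx,\bsy)=-\int_{[0,1]^{2d}} c(\bsx,\bsy)\,\Delta_{\cP,\cA}(B(\bsx,\bsy))\rd(\bsx,\bsy).
\]
H\"older's inequality then yields $|L(f)|\le \|c\|_{L_q}\,L_{p,N}^{\per}(\cP,\cA)$. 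Taking the infimum over all representing $c$ gives $|L(f)|\le \|f\|_{F_{d,q}^{\per}}L_{p,N}^{\per}(\cP,\cA)$. Hence $e(Q_{\cP,\cA};F_{d,q}^{\per})\le L_{p,N}^{\per}(\cP,\cA)$.

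\textbf{Lower bound.} Here we exploit that the norm is an infimum: any admissible $c$ gives $\|T_d^{\per}c\|\le\|c\|_{L_q}$, so it suffices to construct a suitable $c$. Write $\Delta(\bsx,\bsy):=\Delta_{\cP,\cA}(B(\bsx,\bsy))$. If $\Delta=0$ a.e.\ there is nothing to prove. Otherwise we treat three cases.
\begin{itemize}
\item For $p\in(1,\infty)$, take $c_\ast:=-|\Delta|^{p-2}\Delta/\|\Delta\|_{L_p}^{p-1}$. Using $(p-1)q=p$, we get $\|c_\ast\|_{L_q}=1$ and $L(T_d^{\per}c_\ast)=\|\Delta\|_{L_p}$.
\item For $p=1$, take $c_\ast:=-{\rm sgn}(\Delta)$.
\item For $p=\infty$, take $c_\ast:=-{\rm sgn}(\Delta){\bf 1}_{E_\varepsilon}/\lambda(E_\varepsilon)$, where $E_\varepsilon:=\{|\Delta|\ge\|\Delta\|_{L_\infty}-\varepsilon\}$ has positive measure. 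Then let $\varepsilon\to0$, exactly as in the proof of Theorem~\ref{thm2}.
\end{itemize}
Each $c_\ast$ is bounded or lies in $L_q$, so $f_\ast=T_d^{\per}c_\ast\in F_{d,q}^{\per}$ with norm at most $1$. This gives equality. The statement about $e(N,d;F_{d,q}^{\per})$ follows by taking the infimum over $\cP,\cA$.

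\textbf{Main obstacle.} The main delicate point is measure-theoretic, not the duality itself. The map $T_d^{\per}$ produces a function defined pointwise, and point evaluation at the nodes must be meaningful and consistent with the a.e.\ identification. We will use the pointwise integral formula as the definition of $f$. We will also note that the identity for $L(f)$ does not depend on which representing $c$ is chosen, because its left-hand side depends only on $f$. Finally, we will check that $\mu(B(\bsx,\bsy))$ is jointly measurable in $(\bsx,\bsy)$; this holds because the indicator ${\bf 1}_{B(\bsx,\bsy)}(\bst)$ is Borel in $(\bsx,\bsy,\bst)$.
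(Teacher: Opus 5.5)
Your proposal is correct and follows essentially the same route as the paper. Both use a Fubini identity expressing $L(f)$ as $\int c(\bsx,\bsy)\,\mu(B(\bsx,\bsy))\rd(\bsx,\bsy)$, H\"older plus the infimum over representations for the upper bound, and the same three extremal choices of $c$ for $p\in(1,\infty)$, $p=1$ and $p=\infty$. Your sign $L(f)=-\int c\,\Delta_{\cP,\cA}(B(\bsx,\bsy))\rd(\bsx,\bsy)$ is in fact the correct one: the paper's \eqref{fub_per} drops the minus sign, which is harmless there because only $|L(f^\ast)|$ is used.
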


\begin{proof}
The result and its proof are new but they are similar to the one of Theorem~\ref{thm2} and the one of Theorem~\ref{thm3} in \cite{NP26a}. Therefore we provide only the cornerstones. Let $L(f):=I(f)-Q_{\cP,\cA}(f)$ and $\mu=\lambda - \sum_{k=1}^N a_k \delta_{\bsx_k}$ 
as in the proof of Theorem~\ref{thm2}. 
Then, for $f =T_d^\per c \in F_{d,q}^{\per}$, we have $L(f) = \int_{[0,1]^d} f(\bst) \rd\mu(\bst)$. From here, using Fubini, we derive 
\begin{equation}\label{fub_per}
L(f) =  \int_{[0,1]^{2d}} c(\bsx,\bsy)\,\Delta_{\cP,\cA}(B(\bsx,\bsy)) \rd (\bsx,\bsy)
\end{equation}
and then Hölder's inequality yields $|L(f)| \le \|c\|_{L_q([0,1]^{2d})}\,L_{p,N}^{\per}(\cP,\cA)$. Taking the infimum over all representations of $f$ yields $|L(f)| \le \|f\|_{F_{d,q}^{\per}}\,L_{p,N}^{\per}(\cP,\cA)$ and hence $$e(Q_{\cP,\cA};F_{d,q}^{\per})\le L_{p,N}^{\per}(\cP,\cA).$$

In order to prove equality we first assume $p,q \in (1, \infty)$. If $\Delta_{\cP,\cA}(B(\cdot,\cdot)) \equiv 0$ then $L_{p,N}^{\per}(\cP,\cA)=0$ and also $L(f)\equiv 0$, 
so equality holds. Assume now $\Delta_{\cP,\cA}(B(\cdot,\cdot))\not\equiv 0$ and define 
\begin{equation}\label{eq:cstar}
c^{\ast}(\bsx,\bsy):=\frac{|\Delta_{\cP,\cA}(B(\bsx,\bsy))|^{p-2} \Delta_{\cP,\cA}(B(\bsx,\bsy))}{\|\Delta_{\cP,\cA}(B(\cdot,\cdot))\|_{L_p([0,1]^{2 d})}^{p-1}} \quad\text{and} \quad f^\ast:=T_d^{\per} c^\ast
\end{equation}
so that $\|c^{\ast}\|_{L_q([0,1]^{2 d})}=1$ and then $\|f^\ast\|_{F_{d,q}^{\per}}\le 1$ by definition of the norm. By~\eqref{fub_per},
\begin{align*}
|L(f^\ast)| = & \frac{1}{\|\Delta_{\cP,\cA}(B(\cdot,\cdot))\|_{L_p([0,1]^{2 d})}^{p-1}}\int_{[0,1]^{2 d}}|\Delta_{\cP,\cA}(B(\bsx,\bsy))|^p \rd (\bsx,\bsy)\\
= & \|\Delta_{\cP,\cA}(B(\cdot,\cdot))\|_{L_p([0,1]^{2 d})}=L_{p,N}^{\per}(\cP,\cA).
\end{align*}
Thus, $e(Q_{\cP,\cA};F_{d,q}^{\per})\ge L_{p,N}^{\per}(\cP,\cA)$. Combining with the upper bound yields the identity.

For the case $p=1$ we take $c^*(\bsx,\bsy) = {\rm sgn} (\Delta_{\cP,\cA} (B(\bsx,\bsy)))$ and obtain 
again $|L(f^\ast)|=\|\Delta_{\cP,\cA}(B(\cdot,\cdot))\|_{L_1([0,1]^{2d})}=L_{1,N}^{\per}(\cP,\cA)$.

For $p=\infty$, $q=1$ and $\varepsilon>0$, define
\[
E_\varepsilon
:=\{(\bsx,\bsy) \in [0,1]^{2 d}: |\Delta_{\cP,\cA}(B(\bsx,\bsy))| \ge L_{\infty,N}^{\per}(\cP,\cA)-\varepsilon\}.
\]
Assume $\lambda(E_\varepsilon)>0$ and define $c_\varepsilon (\bsx, \bsy) :={\rm sgn} (\Delta_{\cP,\cA} (B(\bsx, \bsy)) )\,  {\mathbf 1}_{E_\varepsilon} (\bsx, \bsy)/\lambda(E_\varepsilon)$. Then $\Vert c_\varepsilon\Vert_{L_1([0,1]^{2 d})}=1$ and
\[
\int_{[0,1]^{2 d}}  c_\varepsilon (\bsx,\bsy)\,\Delta_{\cP,\cA}(B(\bsx,\bsy))\rd (\bsx,\bsy )
\ge L_{\infty,N}^{\per}(\cP,\cA)-\varepsilon.
\]
Letting $\varepsilon\to0$ gives $e(Q_{\cP,\cA};F_{d,1})=L_{\infty,N}^{\per}(\cP,\cA)$, as claimed. \qed
\end{proof}

\noindent{\bf Tractablility vs. curse of dimensionality:} It is known from \cite{DHP} that the periodic $L_2$-discrepancy suffers from the curse of dimensionality. We can extend this result to all $p \in (1,\infty)$, at least for non-negative weights $\cA^+$. 

Let $\di_p^{\per,+}(N,d):=\inf_{\cP,\cA^+} L_{p,N}^{{\rm per}}(\cP,\cA^+)$, where the infimum is extended over all $N$-element sets $\cP$ in $[0,1)^d$ and all corresponding non-negative weights $\cA^+$, and let $$N_p^{{\rm per},+}(\varepsilon,d) := \min\left\{N \in \N : {\rm disc}_p^{{\rm per},+}(N,d) \le \frac{\varepsilon}{(p+1)^{d/p}}\right\}.$$

\begin{theorem}\label{thm4}
For every $p$ in $(1,\infty)$ there exists a real $C_p$ that is strictly larger than 1, such that for all $d \in \N$ and all $\varepsilon \in (0,1/2)$ we have $$N_p^{{\rm per},+}(\varepsilon,d) \ge C_p^d \, (1-2 \varepsilon).$$ 
In particular, for all $p$ in $(1,\infty)$ the periodic $L_p$-discrepancy for non-negative weights $\cA^+$ suffers from the curse of dimensionality.
\end{theorem}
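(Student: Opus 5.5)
Via the duality theorem for $F_{d,q}^{\per}$ just proved, ${\rm disc}_p^{\per,+}(N,d)=e^+(N,d;F_{d,q}^{\per})$. It therefore suffices to verify Assumptions~1--4 of Theorem~\ref{thm1} for the spaces $F_{d,q}^{\per}$, with $q$ the H\"older conjugate of $p\in(1,\infty)$. The conclusion then follows with $C_p=\min(A,B)$, once we check that $e(0,d)=(p+1)^{-d/p}$.

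\emph{Assumption~1.} If $f_j=T_1^{\per}c_j$ for $j=1,\ldots,d$, then the product $c(\bsx,\bsy)=\prod_j c_j(x_j,y_j)$ represents $f=\prod_j f_j$. This holds because ${\bf 1}_{B(\bsx,\bsy)}$ factorizes, and $\|c\|_{L_q}=\prod_j\|c_j\|_{L_q}$. Taking infima over representations gives $\|f\|_d\le\prod_j\|f_j\|_1$.

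\emph{Assumption~2.} By duality, $e(0,d)={\rm disc}_p^{\per}(0,d)=(p+1)^{-d/p}=e(0,1)^d$.

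\emph{Assumption~3.} The worst-case function comes from the extremal $c$ in the duality proof with the empty point set. There $\Delta(B(x,y))=-\lambda(B(x,y))$, so $c(x,y)\propto (y-x\bmod 1)^{p-1}$. I would set
$$h(t):=\int_0^1\!\!\int_0^1 (y-x \bmod 1)^{p-1}\,{\bf 1}_{I(x,y)}(t)\rd x\rd y .$$
By translation invariance on the torus, $h$ is constant: it equals $\int_0^1 u^{p-1}\cdot u\rd u=\frac1{p+1}$ after substituting $u=y-x\bmod 1$. Also $\|h\|_1\le\|u^{p-1}\|_{L_q}=(p+1)^{-1/q}$, and $I(h)=\frac1{p+1}$, so $I(h)/\|h\|_1\ge (p+1)^{-1/p}=e(0,1)$. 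Equality follows from the definition of $e(0,1)$. Since $h$ is a positive constant, $s_y(y)=h(y)$ only requires $s_y(y)=\frac1{p+1}$.

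\emph{Assumption~4 (the main obstacle).} For each $y_0\in[0,1]$ we need $s_{y_0}=T_1^{\per}c\ge0$ with value $\frac1{p+1}$ at $y_0$, and with integral and norm uniformly smaller than those of $h$. By rotation invariance it suffices to treat $y_0=0$ and translate, since translation preserves the norm, the integral and nonnegativity. This gives $A,B$ independent of $y_0$.

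The natural attempt is a nonnegative $c$ concentrated on intervals containing $0$, namely the wrap-around intervals. Writing $g(x,y)$ for the kernel, $I(T c)=\int c\,\lambda(B)$ and $(Tc)(0)=\int c\,{\bf 1}_{0\in I(x,y)}$. To maximize the value at $0$ relative to $\|c\|_q$, choose $c=\kappa\,{\bf 1}_{0\in I(x,y)}\,w(x,y)$, for instance $w=\lambda(I(x,y))^{p-1}$ restricted to wrap-around intervals, or a one-parameter family $w=\lambda^{\beta}$. Then compute three quantities: the value at $0$, the integral, and $\|c\|_q$ (an upper bound for the norm). Each reduces, via $u=\lambda(I)$ and the fact that a proportion $u$ of intervals of length $u$ contains $0$, to one-dimensional integrals such as $\int_0^1 u^{\beta+1}\rd u$, $\int_0^1u^{\beta+2}\rd u$ and $\int_0^1 u^{\beta q+1}\rd u$.

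After normalizing so that $s(0)=\frac1{p+1}$, the check is whether both $I(s)<\frac1{p+1}$ and $\|c\|_q<(p+1)^{-1/q}$ hold. Since $s$ is not constant while $s\le$ a multiple of its maximum, the integral should drop strictly. The norm comparison is the delicate step: I would first try $\beta=p-1$ and compute explicitly, then tune $\beta$ if needed. A fallback is to exploit that $\|h\|_1$ may be strictly less than $\|u^{p-1}\|_q$, using the true infimum norm for $h$ (equal to $\frac1{p+1}/e(0,1)=(p+1)^{-1/q}$ anyway). Strict inequalities for $p\in(1,\infty)$ yield $A,B>1$, and Theorem~\ref{thm1} gives the claim.
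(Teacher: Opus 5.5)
Your route is correct and genuinely different from the paper's, but as written it stops exactly at the step you call the main obstacle. That step closes at once with your first choice $\beta=p-1$. Take $c(x,y)={\bf 1}_{\{0\in I(x,y)\}}\,\lambda(I(x,y))^{p-1}$ and $s_0=T_1^{\per}c$. Your substitution $u=\lambda(I(x,y))$ gives
\[
s_0(0)=\int_0^1 u^{p-1}\,u\rd u=\frac{1}{p+1}=h(0),\qquad I(s_0)=\int_0^1 u^{p-1}\,u^2\rd u=\frac{1}{p+2},\qquad \|c\|_{L_q}^q=\int_0^1 u^{(p-1)q}\,u\rd u=\frac{1}{p+2}.
\]
Hence $A=\frac{p+2}{p+1}$ and $B\ge\bigl(\frac{p+2}{p+1}\bigr)^{1/q}>1$. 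No tuning or fallback is needed, and this computation replaces your heuristic about the integral dropping. With $\beta=0$, i.e.\ $c=\frac{2}{p+1}{\bf 1}_{\{0\in I(x,y)\}}$ (the H\"older-extremal representer of evaluation at $0$), you even get $A=\frac32$ and $B\ge\bigl(\frac{p+1}{2}\bigr)^{1/p}$.

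The paper never verifies the assumptions of Theorem~\ref{thm1} for $F_{d,q}^{\per}$, and it does not use the periodic duality theorem for this result. Instead it uses the shift-averaging identity $L_{p,N}^{\per}(\cP,\cA)^p=\int_{[0,1]^d}L_{p,N}^{\ast}(\cP_{\bsdelta},\cA)^p\rd\bsdelta$. This gives, for any $\cP$ and $\cA^+$, a shift $\bsdelta_\ast$ with $L_{p,N}^{\ast}(\cP_{\bsdelta_\ast},\cA^+)\le L_{p,N}^{\per}(\cP,\cA^+)$. Since the initial star and periodic discrepancies coincide, the star-discrepancy curse of \cite{NP25} transfers verbatim; that result itself comes from Theorem~\ref{thm1} with $h(x)=1-(1-x)^p$.

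The paper's reduction takes two lines and needs no new fooling functions. However, it inherits the star constant, which is barely above $1$ (about $1.002$ for $p=2$). Your direct argument exploits that the periodic worst-case function is constant. As a result, the norm $2^{-1/p}$ of point evaluation strictly exceeds $h(y)/\|h\|_1=(p+1)^{-1/p}$, which is exactly what makes $B>1$ possible. This yields much larger explicit constants, e.g.\ $\sqrt{4/3}$ or $\sqrt{3/2}$ for $p=2$.
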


\begin{proof}
Let $\cP=\{\bsx_1,\ldots,\bsx_N\}$ be a point set in $[0,1)^d$ and let $\bsdelta \in [0,1)^d$. Then we define $\cP_{\bsdelta}:= \{\{\bsx_1+\bsdelta\},\ldots,\{\bsx_N+\bsdelta\}\}$, where $\{\bsx_k+\bsdelta\}$ denotes component-wise addition modulo one. It is known that
$$L_{p,N}^{{\rm per}}(\cP,\cA)=\left( \int_{[0,1]^d} (L_{p,N}^{\ast}(\cP_{\bsdelta},\cA))^p \rd \bsdelta \right)^{1/p}.$$ 
This is the $L_p$-version of \cite[Proposition~1]{DHP} for $p=2$ whose proof immediately applies to the general case. In particular, for every $N$-element set $\cP$ in $[0,1)^d$ there exists a $\bsdelta_{\ast} \in [0,1]^d$ such that $L_{p,N}^{{\rm per}}(\cP,\cA) \ge L_{p,N}^{\ast}(\cP_{\bsdelta_{\ast}},\cA)$.

Let $\varepsilon \in (0,1/2)$ and assume that $\cP$ is an $N$-element set in $[0,1)^d$ such that $L_{p,N}^{{\rm per}}(\cP,\cA^+) \le \varepsilon \, (p+1)^{-d/p}$. Then there exists a $\bsdelta_{\ast} \in [0,1]^d$ such that $$L_{p,N}^{\ast}(\cP_{\bsdelta_{\ast}},\cA^+) \le \frac{\varepsilon}{(p+1)^{d/p}}.$$ Then \cite[Theorem~3]{NP25} implies that $N \ge C_p^d(1-2\varepsilon)$, where $$C_p:=\left(\frac{1}{2}+\frac{p+1}{p}  \frac{1+2^{p/(p+1)}-2^{1/(p+1)}}{4}\right)^{-1} > 1.$$ Thus, $N_p^{{\rm per},+}(\varepsilon,d)  \ge C_p^d(1-2\varepsilon)$, as claimed. \qed 
\end{proof}

For $p=\infty$ the periodic $L_\infty$-discrepancy is polynomially tractable, more precisely there exists a $C>0$ such that $$N_{\infty}^{{\rm per},\qmc}(\varepsilon,d) \le C d \log (d+1) \varepsilon^{-2} \qquad \mbox{for all } d \in \mathbb{N} \mbox{ and } \varepsilon \in (0,1).$$ This follows from \cite[Theorem~4]{hnww} together with \cite[Theorem~1.1]{GLM}, which states that the VC dimension of the class of periodic intervals is asymptotically $d \log_2 d$.

The case $p=1$ is still open.

\section{Summary, open questions and further remarks}\label{sec:close}

We summarize the situation (all instances for non-negative weights $\cA^+$) in the following table, which also highlights the remaining open questions. For quadrant/anchored discrepancy we assume $a \in (0,1)$, since $a\in \{0,1\}$ is covered by
the  star discrepancy.

$$
\begin{array}{l||c|c|c|c}
\text{discrepancy} & \text{curse} & \text{PT} & \text{open} & \text{source}\\
\hline
L_p^{\ast} & p \in (1,\infty) & p=\infty & p=1 & \cite{NP25}, \cite{hnww}\\
L_p^{\boxplus_a},L_p^{\pitchfork_a} & (p,a) \in [1,\infty) \times (0,1) & - &  -  & \cite{NP25b}, \cite{NW01}\\
                                               & \text{or }  (p,a) = (\infty,\tfrac{1}{2}) & & & \\
L_p^{\ex} & p \in (1,\infty) & p=\infty & p=1 & \cite{NP26a}, \cite{hnww,Gne05}\\
L_p^{\per} & p \in (1,\infty) & p=\infty & p=1 & \text{Thm.~\ref{thm4}}, \cite{hnww,GLM}                                               
\end{array}
$$

Also other norms of discrepancy functions are considered in the literature. 
In \cite{P22} it is shown that the star discrepancy in the BMO seminorm suffers 
from the curse of dimensionality. Curse vs. tractability of star discrepancy in Orlicz norms is studied in \cite{DHPP}. The spherical cap $L_2$-discrepancy is studied in \cite{BDP26}. Also the case of weighted discrepancy is studied intensively, see \cite{NP25c} and the references therein.

It would be interesting to identify other discrepancies/problems where the method from Section~\ref{sec:meth} can be successfully applied.

Furthermore, we mention that for $L_p$-discrepancies, if the H\"older conjugate $q$ is an even number, 
the method of decomposable kernels (see \cite[Chapter~11]{NW10}) can be generalized, 
and in this way lower bounds and statements about the curse of dimensionality can be obtained--even 
for arbitrary quadrature rules. For more details see \cite{NP23,NP25b}.

We stress that it is  a great annoyance that we know so little about the $L_1$-discrepancies. This is the big open question that we  pose in conclusion.

\begin{oq}\rm
Prove or disprove the curse of dimensionality for the star-, extreme-, or periodic $L_1$-discrepancy. 
\end{oq}

\vspace{0.5cm}
\noindent{\bf Author's Address:}

\noindent Erich Novak, Mathematisches Institut, FSU Jena, Inselplatz 5, 07743 Jena, Germany. Email: erich.novak@uni-jena.de\\

\noindent Friedrich Pillichshammer, Institut f\"{u}r Finanzmathematik und Angewandte Zahlentheorie, JKU Linz, Altenbergerstra{\ss}e 69, A-4040 Linz, Austria. Email: friedrich.pillichshammer@jku.at

\end{document}